\theoremstyle{plain}
\newcommand{\zd}{{\mathcal D}}
\newcommand{\zf}{{\mathcal F}}
\newcommand{\zg}{{\mathcal G}}
\newcommand{\zi}{{\mathcal I}}
\newcommand{\zv}{{\mathcal V}}
\newcommand{\tg}{\tilde{\gamma}}
\newcommand{\nsw}{{({\nu}^w)}^*}
\newcommand{\R}{\mathbb{R}}
\newtheorem {theorem}{Theorem}[section]
\newtheorem {lemma}[theorem]{Lemma}
\theoremstyle{remark}
\newtheorem {definition}[theorem]{Definition}
\newtheorem {example}[theorem]{Example}
\newtheorem {Conjecture}[theorem]{Conjecture}
\newtheorem {notation}[theorem]{Notation}
\newtheorem {remark}[theorem]{Remark}
 \newtheorem{assertion}{Assertion}
\newtheorem*{assertion*}{Assertion}
\begin{document}
\begin{abstract}
Given a partition $\zv=(V_1, \ldots,V_m)$ of the vertex set of a graph $G$, an {\em independent transversal} (IT) is an independent set in $G$ that contains one vertex from each $V_i$.
A {\em fractional IT} is a non-negative real valued function on $V(G)$ that represents each part with total weight at least $1$, and belongs as a vector to the convex hull of the incidence vectors of independent sets in the graph.
It is known that if the  domination number of the graph induced on the union of every $k$ parts $V_i$ is at least $k$, then there is a fractional IT. We prove a weighted version of this result. This is a special case of a general conjecture, on the weighted version of a duality phenomenon, between independence and domination in pairs of graphs.

\end{abstract}

\title{ Independence-Domination Duality in weighted graphs}
\author{Ron Aharoni}

\address{Department of Mathematics\\
 Technion, Haifa\\
 Israel 32000}

\email{Ron Aharoni: ra@tx.technion.ac.il}

\author{Irina Gorelik}

\address{Department of Mathematics\\
 Technion, Haifa\\
 Israel 32000}

\email{Irina Gorelik: iragor@tx.technion.ac.il}

\maketitle
\begin{section}{Introduction}

\subsection{Domination and collective domination}
All graphs in this paper are assumed to be simple, namely not containing parallel edges or loops. The (open) neighborhood of a vertex $v$ in a graph $G$, denoted  by $\tilde{N}(v)=\tilde{N}_G(v)$,   is the set of all vertices connected to $v$. Given a set $D$ of vertices we write  $\tilde{N}(D)$ for $\bigcup_{v \in D}\tilde{N}(v)$. Let  $N(D)=N_G(D) = \tilde{N}(D) \cup D$.
A set $D$ is said to be {\em dominating} if $N(D)=V$ and  {\em totally dominating} if
$\tilde{N}(D)=V$.
The minimal size of a dominating set is denoted by $\gamma(G)$, and the minimal size of a totally dominating set by
$\tilde{\gamma}(G)$.

There is a {\em collective} version of domination.
Given a system of graphs
$\zg=(G_1,\dots,G_k)$ on the same vertex set $V$, a system  $\zd=(D_1,\dots,D_k)$ of subsets of $V$ is said to be {\em collectively dominating} if $\bigcup_{i \le k}N_{G_i}(D_i)=V$. Let $\gamma_\cup(\zg)$ be the minimum of $\sum_{i \le k}|D_i|$ over all collectively dominating  systems.

\subsection{Independence and joint independence}
A set of vertices is said to be {\em independent} in $G$ if its elements are pairwise non-adjacent. The complex (closed down hypergraph) of independent sets in $G$ is denoted by $\zi(G)$. The {\em independence polytope} of $G$, denoted by
$IP(G)$, is the convex hull of the characteristic  vectors of the
 sets in $\zi(G)$. For a system of graphs
$\zg=(G_1,\dots,G_k)$ on  $V$  the
{\em  joint independence number}, $\alpha_\cap(\zg)$, is
$\max\{|I| : I \in \cap_{i \le k}\zi(G_i)\}$. The
{\em fractional joint independence number}, $\alpha_\cap^*(\zg)$, is
$\max\{\vec{x}\cdot\vec{1}:~~\vec{x}\in\bigcap_{i\le k} IP(G_i)\}$.\\

We shall mainly deal with the case  $k=2$. Let us first observe that it is possible to have   $\alpha_\cap^*(G_1,G_2)<\min(\alpha(G_1),\alpha(G_2))$.
\begin{example}
Let $G_1$ be obtained from the complete bipartite graph with respective sides $\{v_1, \ldots,v_6\}$ and $\{u_1,u_2\}$, by the addition of the edges $v_1v_2$, $v_3v_4$ and $u_1u_2$, and let $G_2=\bar{G_1}$. Then $\alpha(G_1)=\alpha(G_2)=4$, while  $\alpha_\cap^*(G_1,G_2)=2$, the optimal vector in $IP(G_1) \cap IP(G_2)$ being the constant $\frac{1}{4}$ vector.
\end{example}

 A graph $H$ is called a {\em partition graph} if it is the disjoint union of cliques. In a partition graph $\alpha=\gamma$.
The  union of two systems of disjoint cliques is the line graph of a bipartite graph, having the set of cliques in one system as one side of the graph, and the set of cliques in the other system as the other side, an edge connecting two vertices (namely, cliques in different systems) if they intersect.
Thus, by  K\"{o}nig's famous duality theorem \cite{konig}, we have:

\begin{theorem}\label{konig2}
If $G$ and $H$ are partition graphs on the same vertex set, then $$\alpha_\cap(G,H)=\gamma_\cup(G,H)$$.
\end{theorem}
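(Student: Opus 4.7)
The plan is to translate both $\alpha_\cap(G,H)$ and $\gamma_\cup(G,H)$ into standard parameters of the bipartite graph $B$ alluded to in the paragraph above the statement, and then invoke König's theorem. Concretely, let $B$ be the bipartite graph whose two sides are the sets $\zc_G$ and $\zc_H$ of cliques in $G$ and $H$ respectively, with $C \in \zc_G$ joined to $C' \in \zc_H$ whenever $C \cap C' \neq \emptyset$. Since the cliques within each partition graph are vertex-disjoint, each $v \in V$ determines a unique pair $(C,C')$ with $v \in C \cap C'$; this gives a canonical bijection $V \leftrightarrow E(B)$.

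First I would interpret $\alpha_\cap(G,H)$ via $B$. A set $I \subseteq V$ is independent in $G$ iff no two of its vertices share a $G$-clique, and similarly for $H$; under the bijection this says exactly that the corresponding set of edges of $B$ is a matching. Hence $\alpha_\cap(G,H) = \nu(B)$, the maximum matching number of $B$. Next, for $\gamma_\cup(G,H)$, note that $N_G(D_1)$ is the union of the $G$-cliques meeting $D_1$ and $N_H(D_2)$ is the union of the $H$-cliques meeting $D_2$. If two vertices of $D_1$ lie in a common $G$-clique, one may be deleted without affecting $N_G(D_1)$, and similarly for $D_2$; so one may assume each $D_i$ selects at most one vertex per clique. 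Writing $S_1 \subseteq \zc_G$ and $S_2 \subseteq \zc_H$ for the cliques so chosen, we get $|D_1|+|D_2|=|S_1|+|S_2|$, and the condition $N_G(D_1)\cup N_H(D_2)=V$ says precisely that every edge of $B$ has an endpoint in $S_1 \cup S_2$. Hence $\gamma_\cup(G,H) = \tau(B)$, the minimum vertex cover of $B$.

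By König's theorem, $\nu(B) = \tau(B)$, which yields $\alpha_\cap(G,H) = \gamma_\cup(G,H)$. The only step requiring any thought is the reduction that each $D_i$ may be taken to meet each clique of $G_i$ in at most one vertex, and this is immediate from the fact that in a partition graph the neighborhood $N(D)$ depends only on which cliques are hit by $D$. There is thus no genuine obstacle; the theorem is essentially a reformulation of König's theorem along the correspondence $V \leftrightarrow E(B)$.
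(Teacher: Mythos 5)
Your proof is correct and takes essentially the same route the paper itself sketches in the paragraph preceding the statement: identify $V$ with the edge set of the bipartite clique-intersection graph $B$, translate joint independence into matchings and collective domination into vertex covers, and invoke K\"onig's theorem $\nu(B)=\tau(B)$. The only cosmetic caveat is that when two cliques $C\in\zc_G$, $C'\in\zc_H$ share more than one vertex your ``bijection'' $V\leftrightarrow E(B)$ really requires $B$ to be a multigraph (or one simply drops the bijection claim), but this affects neither the identification of the two parameters nor the application of K\"onig's theorem.
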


There are graphs in which $\alpha >\gamma$, and thus
 equality does not necessarily  hold for general pairs $(G,H)$ of graphs,  even when $G=H$. On the other hand, since a maximal independent set is dominating, we have $\gamma(G) \le \alpha(G)$ in every graph $G$.   But  the corresponding inequality for pairs of graphs is not necessarily true, as the following example shows.
\begin{example}\label{noteq}
Let $G=P_4$, namely the path with $3$ edges on $4$ vertices, and let $H$ be its complement. Then  $\alpha_\cap(G,H)=1$
 and $\gamma_\cup(G,H)=2$, so $\alpha_\cap(G,H)<\gamma_\cup(G,H)$.
\end{example}

However, as was shown in  \cite{abhk}, if $\alpha_\cap$ is replaced by its
fractional version, then the non-trivial inequality in Theorem
\ref{konig2} does hold.

\begin{theorem}\label{inddom}
For any two graphs $G$ and $H$ on the same set of vertices we have $$\alpha_\cap^*(G,H)\geq\gamma_\cup(G,H)$$.
\end{theorem}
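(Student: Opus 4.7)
The plan is to prove Theorem \ref{inddom} by linear programming duality. Writing each $\vec{x} \in IP(G) \cap IP(H)$ in the two-fold form $\vec{x} = \sum_{I \in \zi(G)} \lambda_I \chi_I = \sum_{J \in \zi(H)} \mu_J \chi_J$, the primal LP computing $\alpha_\cap^*(G,H)$ is
\[
\max \sum_{I \in \zi(G)} \lambda_I |I| \quad \text{s.t.} \quad \sum_{I \ni v} \lambda_I = \sum_{J \ni v} \mu_J \;\; (v \in V), \;\; \sum_I \lambda_I = \sum_J \mu_J = 1, \;\; \lambda, \mu \ge 0.
\]
Introducing dual variables $y \in \R^V$ (for the marginal constraints) and $a, b \in \R$ (for the normalizations), LP duality gives
\[
\alpha_\cap^*(G,H) \;=\; \min_{y \in \R^V} \bigl[\,\max_{I \in \zi(G)}(|I| - y(I)) + \max_{J \in \zi(H)} y(J)\,\bigr].
\]
The theorem is therefore equivalent to the assertion that the bracket is at least $\gamma_\cup(G,H)$ for every $y \in \R^V$.

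I would establish this by extracting a collectively dominating pair from $y$. Let $I^* \in \zi(G)$ maximize $|I| - y(I)$ and let $J^* \in \zi(H)$ maximize $y(J)$. The key combinatorial step is a local-optimality (swap) argument: for any $v \notin I^*$, the set $(I^* \setminus N_G(v)) \cup \{v\}$ is independent in $G$, so by optimality of $I^*$ one has $\sum_{u \in I^* \cap N_G(v)}(1 - y_u) \ge 1 - y_v$. In particular every $v$ with $y_v < 1$ has $I^* \cap N_G(v) \ne \emptyset$, that is, $\{v : y_v < 1\} \subseteq N_G(I^*)$. Symmetrically $\{v : y_v > 0\} \subseteq N_H(J^*)$, and these two sets always cover $V$, so $(I^*, J^*)$ is a collectively dominating pair. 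This already gives the crude integer bound $|I^*| + |J^*| \ge \gamma_\cup(G,H)$.

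The difficulty lies in upgrading this cardinality inequality to the weighted inequality $(1-y)(I^*) + y(J^*) \ge \gamma_\cup(G,H)$ that the LP value actually demands. The natural device is to package the data as the fractional system
\[
d^{(1)}_v = (1 - y_v)\,\mathbf{1}[v \in I^*], \qquad d^{(2)}_v = y_v\,\mathbf{1}[v \in J^*],
\]
whose total weight is exactly $(1-y)(I^*) + y(J^*)$ and whose supports lie in independent sets of $G$ and $H$ respectively. A sharpening of the swap inequality (applied case-by-case as $v$ varies over the four regions $I^* \cap J^*$, $I^* \setminus J^*$, $J^* \setminus I^*$, $V \setminus (I^* \cup J^*)$) shows that this pair does satisfy the fractional collective-domination inequality $\sum_{u \in N_G(v)} d^{(1)}_u + \sum_{u \in N_H(v)} d^{(2)}_u \ge 1$ for every $v \in V$. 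The main technical obstacle then becomes the structural integrality statement that the LP of fractional collectively dominating pairs \emph{with independent supports} has value equal to the integer invariant $\gamma_\cup(G,H)$ itself — not merely the value of the unconstrained fractional relaxation $\gamma^*_\cup$, which can be strictly smaller (as already the example $G = P_4$, $H = \bar{P_4}$ of Example \ref{noteq} shows). Establishing this independent-support integrality is where the bulk of the argument must go.
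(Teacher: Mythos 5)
A preliminary remark: the paper does not actually prove Theorem \ref{inddom} --- it quotes it from \cite{abhk} --- so your proposal can only be measured against the cognate argument the paper does give, namely the proof of Theorem \ref{thm:partition}. Your LP-duality reduction is correct, the swap inequalities for the maximizers $I^*$ and $J^*$ are valid (and the normalizations $y\le 1$ on $I^*$ and $y\ge 0$ on $J^*$ needed for $d^{(1)},d^{(2)}\ge 0$ hold automatically, since deleting an offending vertex would improve the respective maximum), the covering observation $N_G(I^*)\cup N_H(J^*)=V$ is a genuine correct step, and the four-case verification that $(d^{(1)},d^{(2)})$ is fractionally collectively dominating does go through.

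The gap is that the statement to which you defer ``the bulk of the argument'' is false: it is not true that every fractional collectively dominating pair whose supports are independent in $G$ and $H$ respectively has total weight at least $\gamma_\cup(G,H)$. Take the paper's own Example \ref{noteq}: $G=P_4$ with edges $12,23,34$ and $H=\bar{G}$ with edges $13,14,24$, so $\gamma_\cup(G,H)=2$. Put $f_1=\frac{1}{2}\chi_{\{2\}}$ and $f_2=\frac{1}{2}\chi_{\{1,2\}}$. Then $supp(f_1)=\{2\}\in\zi(G)$ and $supp(f_2)=\{1,2\}\in\zi(H)$, and one checks directly that $f_1[N_G(v)]+f_2[N_H(v)]=1$ for every $v\in\{1,2,3,4\}$ (for $v=4$ the whole unit comes from $f_2[N_H(4)]=f_2(1)+f_2(2)$), yet $|f_1|+|f_2|=\frac{3}{2}<2$. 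So independence of the supports alone does not restore integrality, and your plan cannot be completed as stated: any successful finish must exploit the extra structure of your particular pair --- that it comes from an optimal dual $y$ together with the swap-optimality/complementary-slackness relations --- and that is precisely where an integrality mechanism has to enter. For comparison, the paper's proof of Theorem \ref{thm:partition} obtains integrality by splitting the dual variables into integer and fractional parts, letting the integer parts serve as one dominating function outright, and then using complementary slackness together with the integrality of $w$ to force a contradiction between an integer gap $k_v\ge 1$ and a signed sum of fractional parts that is $<1$; some analogue of that step is the missing ingredient in your argument.
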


 In Example \ref{noteq}   $\vec{\frac{1}{2}}\in IP(G)\cap IP(H)$, and  $\alpha_\cap^*(G,H)=2$, so  $\alpha_\cap^*(G,H)=\gamma_\cup(G,H)$.

\begin{lemma}\label{fracit}
Let $\zv=(V_1, \ldots ,V_m)$ be a system of disjoint sets, let $\zi$ be the set of ranges of partial choice functions from $\zv$,  and let $V=\bigcup_{i \le m} V_i$.
Then
$$\{f: V \to \R^+ \mid \sum_{v \in V_j}f(v)\le 1 \text{~for~ every~} j \le m \}=conv(\{\chi_I \mid I \in \zi\})$$.
\end{lemma}

\begin{proof}
Obviously, the right hand side is contained in the left hand side. For the reverse containment,  let
$f: V \to \R^+$ be such that $\sum_{v \in V_j}f(v)\le 1$ for every $j \le m$, and assume for negation that it can be separated from  all functions $\chi_I$, $ I \in \zi$, namely there exists a vector $\vec{u}$ such that $\sum_{v\in V} u(v)f(v)\ge 1$, and $\sum_{v\in I}u(v)=\sum_{v\in V} u(v)\chi_I(v)<1$ for all
$I \in \zi$. Since $conv(\{\chi_I \mid I \in \zi\})$ is closed down, we may assume that $\vec{u}$ is non-negative. For each $j \le m$ let $v_j$ be such that $u(v_j)$ is maximal over all $v\in V_j$, and let $I=\{v_j \mid j \le m\}$. The fact that $\sum_{v\in I}u(v)< 1$ implies then that

$$\sum_{j \le m}\sum_{v\in V_j}u(v)f(v)\le \sum_{j \le m}u(v_j)\sum_{v \in V_j} f(v)\le \sum_{j \in V_j}u(v_j)< 1, $$

a contradiction.

\end{proof}

\subsection{Independent transversals}
When one graph in the pair $(G,H)$, say $H$, is a partition graph, the parameters $\alpha_\cap(G,H)$  and $\alpha^*_\cap(G,H)$ can be described   using   the terminology of so-called {\em independent transversals}.
Given a graph $G$ and a partition $\zv=(V_1, \ldots ,V_m)$ of $V(G)$, an  independent transversal (IT) is an independent set in $G$ consisting of the choice of one vertex from each set $V_i$. A {\em partial IT} is an independent set representing  some $V_i$'s (so, it is the independent range of a  partial choice function from $\zv$). A function $f : V \to \R^+$ is called a {\em partial fractional IT} if, when viewed as a vector, it belongs to  $IP(G)$, and $\sum_{v\in V_j}f(v)\le 1$ for all $j \le m$. If $\sum_{v\in V_j}f(v)=1$ for all $j \le m$ then $f$ is called a {\em fractional IT}. By  Lemma \ref{fracit} this means that $f \in IP(H) \cap IP(G)$, namely it is a jointly fractional independent set, where $\zv$ is the set of cliques in $H$.

For $I \subseteq [m]$ let $V_I=\bigcup_{i \in I}V_i$.

The following was proved in \cite{penny}:

\begin{theorem}\label{thm:penny}
If $\tg(G[V_I]) \ge 2|I|-1$ for every $I \subseteq [m]$ then there exists an IT.
\end{theorem}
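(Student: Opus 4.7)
My plan is to prove Theorem \ref{thm:penny} via the topological method that has come to dominate this area, combining the Aharoni--Haxell topological Hall theorem with Meshulam's connectivity bound in terms of total domination. Define the topological connectivity index $\eta(\zk)$ of a simplicial complex $\zk$, normalized so that $\eta(\zk) \ge k$ means $\|\zk\|$ is $(k-2)$-connected.

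The first ingredient is the topological Hall theorem: if $\eta(\zi(G[V_I])) \ge |I|$ for every $I \subseteq [m]$, then an IT exists. I would invoke this as a black box; its standard proof proceeds via a nerve-theorem argument combined with an equivariant obstruction.

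The second ingredient, due to Meshulam, is the bound $\eta(\zi(G)) \ge \lceil \tg(G)/2 \rceil$. I would prove this by induction on $|V(G)|$, using at each step a Mayer--Vietoris decomposition $\zi(G) = \zi(G - v) \cup \bigl(v * \zi(G - N(v))\bigr)$ with intersection $\zi(G - N(v))$, which yields $\eta(\zi(G)) \ge \min\bigl\{\eta(\zi(G - v)),\ \eta(\zi(G - N(v))) + 1\bigr\}$. The crucial combinatorial observation is that $\tg(G - N(v)) \ge \tg(G) - 2$, since any total dominating set of $G - N(v)$ extends to one of $G$ by adjoining $v$ together with any one of its neighbors. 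This provides exactly the $+1$ boost in connectivity that the induction needs to close.

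With both ingredients in hand, the conclusion is immediate: the hypothesis $\tg(G[V_I]) \ge 2|I| - 1$ combined with Meshulam's bound gives $\eta(\zi(G[V_I])) \ge \lceil (2|I| - 1)/2 \rceil = |I|$ for every $I$, so the topological Hall theorem produces an IT. The main obstacle lies in calibrating the $\lceil \cdot /2 \rceil$ in Meshulam's bound: this ceiling is precisely what distinguishes the sharp hypothesis $2|I| - 1$ from the weaker $2|I|$, so the base case of the induction and the Mayer--Vietoris step must be set up to preserve this tightness. An alternative route would be a direct Haxell-style argument on a maximum partial IT with a minimality tiebreaker, extracting a totally dominating set of size at most $2|I^*| - 2$ from the swap structure; but keeping the amortization tight is delicate, and the topological route seems cleaner.
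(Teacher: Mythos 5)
The paper itself does not prove Theorem \ref{thm:penny}; it quotes it from \cite{penny}, where it is obtained by a purely combinatorial argument on a maximum partial IT (essentially the ``alternative route'' you mention at the end and decline to pursue). Your proposed route --- the Aharoni--Haxell topological Hall theorem plus a lower bound on the connectivity of the independence complex in terms of $\tg$ --- is a legitimate and by now standard alternative, and the closing arithmetic $\lceil(2|I|-1)/2\rceil=|I|$ is exactly where the hypothesis $2|I|-1$ (rather than $2|I|$) is consumed. So the architecture is sound, and the first ingredient is fine as a black box.

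The gap is in your proof of the second ingredient, the bound $\eta(\zi(G))\ge\lceil\tg(G)/2\rceil$. The decomposition $\zi(G)=\zi(G-v)\cup\bigl(v*\zi(G-N(v))\bigr)$ and the inequality $\eta(\zi(G))\ge\min\bigl\{\eta(\zi(G-v)),\,\eta(\zi(G-N(v)))+1\bigr\}$ are correct, and your treatment of the second branch via $\tg(G-N(v))\ge\tg(G)-2$ is right (modulo the trivial case of an isolated $v$, where $\zi(G)$ is a cone). But you say nothing about the first branch, and there the induction does not close: deleting a \emph{vertex} can strictly decrease the total domination number, so the inductive hypothesis only gives $\eta(\zi(G-v))\ge\lceil\tg(G-v)/2\rceil$, which can be smaller than $\lceil\tg(G)/2\rceil$. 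Concretely, $\tg(P_9)=5$ while deleting an endpoint gives $\tg(P_8)=4$, so $\lceil\tg/2\rceil$ drops from $3$ to $2$. To rescue your scheme you would have to show that one can always choose $v$ with $\tg(G-v)\ge\tg(G)$ (or with $G-v$ containing an isolated vertex), which you neither assert nor prove. The standard proof sidesteps this by inducting on the number of \emph{edges} instead: for an edge $e=xy$ one writes $\zi(G\setminus e)=\zi(G)\cup\bigl(\{x,y\}*\zi(G-N(x)-N(y))\bigr)$, whose intersection with $\zi(G)$ is the suspension of $\zi(G-N(x)-N(y))$, yielding $\eta(\zi(G))\ge\min\bigl\{\eta(\zi(G\setminus e)),\,\eta(\zi(G-N(x)-N(y)))+1\bigr\}$; now both combinatorial facts needed are true, namely $\tg(G\setminus e)\ge\tg(G)$ (deleting an edge cannot help total domination) and $\tg(G-N(x)-N(y))\ge\tg(G)-2$ (adjoin $x$ and $y$). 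With that replacement your argument goes through.
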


Theorem \ref{inddom}, applied to the case in which $H$ is a partition graph, yields:

\begin{theorem}\label{fractr}
If $\gamma(G[V_I]) \ge |I|$ for every $I \subseteq [m]$ then there exists a fractional IT.
\end{theorem}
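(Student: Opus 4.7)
The plan is to derive Theorem~\ref{fractr} from Theorem~\ref{inddom} by choosing $H$ to be the partition graph of $\zv$, i.e.\ the disjoint union of cliques on $V_1,\ldots,V_m$. Under this choice, Lemma~\ref{fracit} identifies $IP(H)$ with the polytope $\{f \ge 0 : \sum_{v \in V_j} f(v) \le 1 \text{ for every } j\}$, so $IP(G) \cap IP(H)$ is precisely the set of partial fractional IT's. Summing the part constraints gives $\alpha_\cap^*(G,H) \le m$, with equality forcing every part-sum to equal $1$ in a maximizer --- i.e.\ producing a fractional IT. Hence the task reduces to proving $\alpha_\cap^*(G,H) \ge m$, and by Theorem~\ref{inddom} it suffices to show $\gamma_\cup(G,H) \ge m$.

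To establish $\gamma_\cup(G,H) \ge m$, fix any collectively dominating pair $(D_1, D_2)$. After trimming duplicates we may assume $D_2$ meets each $V_j$ in at most one vertex; set $J = \{j : D_2 \cap V_j \neq \emptyset\}$ and $I = [m] \setminus J$, so $|D_2| = |J| = m - |I|$. Since $H$ is a partition graph, $N_H(D_2) = V_J$, and the collective domination condition $N_G(D_1) \cup N_H(D_2) = V$ translates into $V_I \subseteq N_G(D_1)$: the set $D_1$ dominates $V_I$ in $G$. The inequality $|D_1| + |D_2| \ge m$ is then equivalent to $|D_1| \ge |I|$.

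The main step is to deduce $|D_1| \ge |I|$ from the hypothesis $\gamma(G[V_I]) \ge |I|$. When $D_1 \subseteq V_I$ this is immediate, since $D_1$ is a dominating set of $G[V_I]$. In general, however, $D_1$ may use vertices outside $V_I$ that dominate $V_I$ via cross-edges of $G$, so one must replace $D_1$ by a set $D_1' \subseteq V_I$ of size at most $|D_1|$ that still dominates $V_I$ inside $G[V_I]$. The natural strategy is to exploit minimality of $(D_1, D_2)$: every $u \in D_1$ must possess a \emph{private} neighbor in $V_I$, namely a vertex of $V_I$ covered by $u$ but by no other element of $D_1 \cup D_2$ (otherwise $u$ could be dropped). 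Choosing one such private neighbor for each $u$ gives an injection $\phi: D_1 \to V_I$, whose image serves as the candidate $D_1'$. I expect the main technical obstacle to be verifying that $\phi(D_1)$ actually dominates $V_I$ inside $G[V_I]$ --- not only in $G$ --- so that the replacement argument closes without loss. Once this is in place, the hypothesis yields $|D_1| = |D_1'| \ge \gamma(G[V_I]) \ge |I|$, and the proof is complete.
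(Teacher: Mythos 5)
Your setup is correct up to its final step: with $H$ the partition graph of $\zv$, a fractional IT exists if and only if $\alpha_\cap^*(G,H)=m$, and the bookkeeping with $J$, $I$ and the trimmed $D_2$ is fine. But the inequality you reduce everything to, namely $\gamma_\cup(G,H)\ge m$, is \emph{false} under the hypothesis of the theorem, so no version of the replacement argument can close the gap. Take $m=3$, $V_1=\{a\}$, $V_2=\{b\}$, $V_3=\{u,w_1,w_2,w_3\}$, with $E(G)=\{ua,ub\}$. The hypothesis holds: for $I=\{1,2\}$ the vertices $a,b$ are non-adjacent, so $\gamma(G[V_I])=2=|I|$; for any $I$ containing $3$ the isolated vertices $w_1,w_2,w_3$ force $\gamma(G[V_I])\ge 3\ge |I|$; singletons are trivial. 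Yet $D_1=\{u\}$, $D_2=\{w_1\}$ is collectively dominating, since $N_G(D_1)=\{u,a,b\}$ and $N_H(D_2)=V_3$, so $\gamma_\cup(G,H)=2<3=m$ (a fractional, indeed integral, IT $\{a,b,w_1\}$ nevertheless exists, so the theorem is not contradicted). This is exactly the failure mode you flagged as the "main technical obstacle": here $I=\{1,2\}$, the single vertex $u\in V_J$ privately dominates both $a$ and $b$, your injection $\phi$ can retain only one of them, and neither $\{a\}$ nor $\{b\}$ dominates $V_I$ in $G[V_I]$. No replacement set $D_1'\subseteq V_I$ of size $|D_1|$ can exist, because $\gamma(G[V_I])=2>1=|D_1|$: a vertex of $D_1$ lying outside $V_I$ may dominate arbitrarily many pairwise non-adjacent vertices of $V_I$, and the hypothesis constrains only domination from within $V_I$.

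The upshot is that Theorem \ref{fractr} does not follow from the bare inequality $\alpha_\cap^*(G,H)\ge\gamma_\cup(G,H)$ via "hypothesis implies $\gamma_\cup(G,H)\ge m$". The paper itself gives no argument beyond pointing to Theorem \ref{inddom} (the real work being in \cite{abhk}), and the derivation there must use a stronger form of the duality --- one in which the $G$-dominating set can be taken inside $V\setminus N_H(D_2)=V_I$, or equivalently a deficiency-type argument applied to the induced subpartitions $G[V_I]$ --- or a direct LP-duality argument in the spirit of the proof of Theorem \ref{thm:partition}. To repair your proof you would need to supply such a strengthening; as written, the statement you aim to prove in your second paragraph is not true.
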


\section{Putting weights on the vertices}

In \cite{abz} a weighted version of Theorem \ref{thm:penny} was proved.
As is often the case with weighted versions, the motivation came from  decompositions: weighted results give, by duality, fractional decompositions results. It is conjectured that if $|V_i| \ge 2\Delta(G)$ then there exists a partition of $V(G)$ into $\max_{i \le m}|V_i|$ IT's. The weighted version of Theorem \ref{thm:penny} yielded the existence of a fractional such decomposition.

\begin{notation}
Given a real valued function $f$ on a set $S$,  and a set $A\subseteq
S$, define $f[A]=\sum_{a\in A}f(a)$. We also write $|f|=f[S]$ and we
call $|f|$  the {\em size} of $f$.
\end{notation}

\begin{definition}
Let $G=(V,E)$  be a graph, and let $w:V\to\mathbb{N}$  be a weight
function on $V$. We say that a function $f:V\to\mathbb{N} $ {\em
$w$-dominates} a set $U$ of vertices, if $f[N(u)]\geq w(u)$ for every $u\in U$. We say that $f$ is {\em $w$-dominating} if it $w$-dominates $V$.
 The {\em weighted domination number}
$\gamma^w(G)$  is   $\min\{|f| \mid f ~\text{is  $w$-dominating}\} $
\end{definition}

This definition extends to systems of graphs:

\begin{definition}
Let $\mathcal{G}=(G_1,\dots,G_k)$ be  a system of graphs on the same
vertex set $V$. Let $w:V\to\mathbb{N}$ be a non-negative weight function on $V$, and let $\zf=(f_i:~V\to\mathbb{N}, ~~i \le k)$ be a system of functions. We say that
$\zf$
$w$-dominates $\zg$ if   $\sum_{i=1}^k f_i[N_{G_i}(v)]\geq w(v)$ for every $v\in V$. The {\em
weighted collective domination number } is
$$\gamma_\cup^w(\mathcal{G})=\min\{\sum_{i=1}^k
|f_i|:(f_1,\dots,f_k)\; is\; w-dominating\}.$$

The extension of the independence parameter to the weighted case is also quite natural:

$$(\alpha_\cap^w)^*(\zg)=\max\{\sum_{v \in V}x(v)w(v) \mid \vec{x}\in\bigcap_{i=1}^k
IP(G_i)\}.$$
\end{definition}

The aim of this paper is to study the following possible   extension of Theorem \ref{inddom} to the weighted case.

\begin{Conjecture}\label{conj:main}
If $G$ and $H$ are graphs on the same vertex set $V$
then for any weight function $w:V\to\mathbb{N}$  we have
$$(\alpha_\cap^w)^*(G,H)\geq\gamma_\cup^w(G,H).$$
\end{Conjecture}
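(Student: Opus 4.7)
The plan is to proceed in two stages: an LP-duality stage that establishes the fractional version of the inequality, and a combinatorial rounding stage to close the resulting integrality gap. Let $(\gamma_\cup^w)^*(G,H)$ denote the linear-programming relaxation of $\gamma_\cup^w(G,H)$, obtained by allowing $f_1,f_2:V\to\R^+$. By standard LP duality,
\[
(\gamma_\cup^w)^*(G,H) \;=\; \max\{w\cdot y : y\in Q_G\cap Q_H\}, \qquad Q_G := \{y\ge 0 : y[N_G(u)]\le 1 \text{ for every } u\in V\},
\]
and $Q_H$ is defined analogously. The first goal is to establish $Q_G\cap Q_H\subseteq IP(G)\cap IP(H)$, which immediately yields $(\alpha_\cap^w)^*(G,H)\ge(\gamma_\cup^w)^*(G,H)$.

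To prove $Q_G\subseteq IP(G)$ (the argument for $H$ is identical), fix any $\phi:V\to\R^+$ and choose an inclusion-maximal max $\phi$-weight independent set $I^*$ of $G$. A standard swap argument (replacing $I^*$ by $(I^*\setminus N_G(u))\cup\{u\}$ and invoking the maximality of $\phi[I^*]$) shows that for every $u\notin I^*$ one has $\phi[N_G(u)\cap I^*]\ge \phi(u)$. Hence $f:=\phi\cdot\chi_{I^*}$ is $\phi$-dominating in $G$ with $|f|=\alpha^\phi(G):=\max\{\phi[I]:I\in\zi(G)\}$, so the fractional $\phi$-weighted domination number $\gamma^{*,\phi}(G)$ is at most $\alpha^\phi(G)$. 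Dualizing, $\max_{y\in Q_G}\phi\cdot y\le\alpha^\phi(G)=\max_{y\in IP(G)}\phi\cdot y$ for every $\phi\ge 0$; since $Q_G$ and $IP(G)$ are both down-closed in $\R^V_{\ge 0}$, this forces $Q_G\subseteq IP(G)$.

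The second stage is to bridge the integrality gap $(\gamma_\cup^w)^*(G,H)\le\gamma_\cup^w(G,H)$, which can be strict even in the unweighted case (for $G=H=C_5$ with $w\equiv 1$ one has $5/3$ versus $2$). I would attempt induction on $|w|=\sum_v w(v)$: with the inductive hypothesis in hand for all smaller weights, identify a vertex $v^*$ whose weight is ``essential'' in the sense that $\gamma_\cup^{w-\chi_{\{v^*\}}}(G,H)=\gamma_\cup^w(G,H)-1$, apply induction to $w':=w-\chi_{\{v^*\}}$ to produce $x'\in IP(G)\cap IP(H)$ with $w'\cdot x'\ge\gamma_\cup^{w'}(G,H)$, and then modify $x'$ by raising its value at $v^*$ by one unit while preserving polytope membership.

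This is where I expect the main obstacle to lie: the two requirements of finding a suitable $v^*$ and securing room in $IP(G)\cap IP(H)$ for an extra unit at that same $v^*$ must be arranged simultaneously, and the $C_5$ example shows that plain LP duality is insufficient to close the gap. The proof of the partition-graph special case carried out in this paper exploits the rigid product structure of $H$'s cliques via the weighted analogue of Theorem~\ref{fractr}; for a general $H$ no such structural rigidity is present, and a fully general proof will likely require either a new combinatorial invariant linking integer $w$-dominating pairs $(f_1,f_2)$ to fractional joint independent sets, or a topological argument in the spirit of Haxell's proof of Theorem~\ref{thm:penny}.
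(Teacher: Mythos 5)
The statement you are addressing is Conjecture \ref{conj:main}, which the paper itself does not prove: it establishes only the special case in which $H$ is a partition graph (Theorem \ref{fractrconj}). Your proposal, as you acknowledge, does not prove it either, so the question is only how much ground it covers. The first stage is correct: the LP dual of the relaxation $(\gamma_\cup^w)^*(G,H)$ (real-valued $f_1,f_2$) is indeed $\max\{w\cdot y : y\in Q_G\cap Q_H\}$, the swap argument does show that $\phi\cdot\chi_{I^*}$ is a fractional $\phi$-dominating function of size $\alpha^{\phi}(G)$, and antiblocking duality for down-closed polytopes then yields $Q_G\subseteq IP(G)$, hence $(\alpha_\cap^w)^*(G,H)\ge(\gamma_\cup^w)^*(G,H)$. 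But this is strictly weaker than the conjecture: the entire content of the statement, already present in its unweighted ancestor Theorem \ref{inddom}, is that the \emph{integer} collective domination number, which can exceed its LP relaxation, is still bounded by the fractional joint independence number; your $C_5$ computation correctly exhibits that gap.

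The missing second stage is not merely unfinished; the inductive scheme you sketch breaks at its first step, because a vertex $v^*$ with $\gamma_\cup^{w-\chi_{\{v^*\}}}(G,H)=\gamma_\cup^w(G,H)-1$ need not exist. Take $G=H=C_5$ and $w\equiv 1$, so that $\gamma_\cup^w=\gamma(C_5)=2$. Removing the demand at any one vertex leaves four vertices to be dominated by an integral function of size $1$, i.e.\ by a single closed neighbourhood; closed neighbourhoods in $C_5$ have size $3$, so $\gamma_\cup^{w-\chi_{\{v^*\}}}=2$ for every choice of $v^*$. Moreover, even where such a $v^*$ exists, raising $x'(v^*)$ by a full unit exits $IP(G)\cap IP(H)$ as soon as $x'$ puts positive mass on a neighbour of $v^*$ (the edge inequality $x(v^*)+x(u)\le 1$ is violated), so the two requirements you flag are genuinely in tension. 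For comparison, the mechanism the paper uses in the partition case is of the kind you say is needed: it rounds the \emph{dual} variables of the fractional IT program, assigns the integer parts $\lfloor y_j\rfloor$ to the partition graph as the function $g$, and then uses complementary slackness together with the integrality of $w$ (a deficiency at a bad vertex would have to be a positive integer, yet is bounded by a sum of fractional parts, which is strictly less than $1$). That argument hinges on having one dual variable per clique of $H$ and does not survive the passage to general $H$; the unweighted general case in \cite{abhk} already required topological methods, which is consistent with your closing assessment. In sum, your proposal correctly proves the fractional relaxation of the conjecture and correctly locates the difficulty, but supplies no argument for the conjecture itself.
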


If $H=G$ then
the stronger $\alpha_\cap^w(G,G)  \geq\gamma_\cup^w(G,G)$ is true, namely:

\begin{lemma}
$\alpha^w(G) \ge \gamma^w(G)$.
\end{lemma}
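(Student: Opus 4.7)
The plan is to exhibit a $w$-dominating function $f$ whose size equals the weight of a maximum weight independent set, i.e.\ equals $\alpha^w(G)$. Let $I$ be an independent set achieving $w[I]=\alpha^w(G)$, and define $f:V\to\mathbb{N}$ by $f(v)=w(v)$ if $v\in I$ and $f(v)=0$ otherwise. Then $|f|=w[I]=\alpha^w(G)$, so once $w$-domination is verified, the desired inequality $\gamma^w(G)\le\alpha^w(G)$ drops out.

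To check that $f$ is $w$-dominating, fix $u\in V$. If $u\in I$, then $u\in N(u)$ and $f(u)=w(u)$, so $f[N(u)]\ge w(u)$. If $u\notin I$, let $S=I\cap\tilde{N}(u)$. Because $S$ consists of all neighbors of $u$ in $I$, the set $I'=(I\setminus S)\cup\{u\}$ is still independent in $G$. The maximality of $w[I]$ then gives $w[I']\le w[I]$, which rearranges to $w(u)\le w[S]=f[S]\le f[N(u)]$, as required.

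There is no real obstacle here; the argument is simply the weighted analogue of the classical fact that a maximal independent set is dominating, with the local-swap inequality $w(u)\le w[I\cap\tilde{N}(u)]$ replacing the unweighted observation that $u$ has at least one neighbor in any inclusion-maximal $I$. The only point worth a moment of care is why $I'$ remains independent after the swap, and this is immediate since removing from $I$ all neighbors of $u$ leaves $u$ with no remaining neighbors inside the set. Notice that the argument establishes the integer version $\alpha^w(G)\ge\gamma^w(G)$, which is strictly stronger than the fractional analogue that would be needed to align with Conjecture \ref{conj:main}.
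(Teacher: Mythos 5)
Your proof is correct, but it takes a genuinely different route from the paper's. You start from a maximum-weight independent set $I$ and show, via a local swap ($I'=(I\setminus S)\cup\{u\}$ with $S=I\cap\tilde{N}(u)$), that the function equal to $w$ on $I$ and $0$ elsewhere is already $w$-dominating; the inequality $w(u)\le w[S]$ falls out of the optimality of $I$. The paper instead runs a greedy construction: it fixes an ordering $v_1,\dots,v_n$, defines $f(v_i)=[w(v_i)-\sum_{v_j\in N(v_i),\,j<i}f(v_j)]^+$ (which is $w$-dominating by construction), and then extracts an independent set $I$ from the support of $f$ by repeatedly taking the highest-indexed surviving vertex and deleting its closed neighborhood, using the defining equation of $f$ to certify $w[I]\ge |f|$. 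Your argument is shorter and makes transparent that the witness dominating function can be taken to be supported on an optimal independent set -- it is exactly the weighted analogue of ``a maximal independent set is dominating,'' with the swap inequality replacing maximality under inclusion. The paper's greedy argument constructs both objects simultaneously without ever invoking an optimal independent set, which is more algorithmic in flavor and closer in spirit to the sequential arguments used elsewhere in this line of work; but for the purpose of this lemma the two proofs deliver the same integral statement $\alpha^w(G)\ge\gamma^w(G)$, and yours is a legitimate, complete alternative.
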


\begin{proof}

We  have to exhibit a $w$-dominating function $f$ and an independent set $I$ with $|f|\leq w[I]$.

Let $V(G)=\{v_1,\dots,v_n\}$. We define a $w$-dominating function  $f:V\to \mathbb{N}$ inductively.
Let $f(v_1)=w(v_1)$. Having defined $f(v_1),\dots,f(v_{i-1})$ let $$f(v_i)= [w(v_i)-\sum_{v_j\in N(v_i),\; j<i}f(v_j)]^+$$
Clearly, $f$ is $w$-dominating.

We next find an independent set $I$ such that $w[I]\geq|f|$.
Let $v_{i_1}$ be the vertex that has  the maximal index over all the vertices in $V_1=V\cap supp(f)$. Since $f(v_{i_1})>0$, we have  $f[N(v_{i_1})]=w(v_{i_1})$.

Suppose that we have defined the sets of vertices $V_1,V_2,\dots, V_{k-1}$ and vertices $v_{i_1},\dots,v_{i_k}$ such that $v_{i_j}$ is the vertex whose index is maximal over all the vertices in $V_j$ where $V_j=V_{j-1}\setminus N(v_{i_{j-1}})$ for every $j=1,\dots,k-1$.

Let $V_k=V_{k-1}\setminus N(v_{i_{k-1}})$ and let $v_{i_k}$ be the vertex whose index is maximal over all the vertices in $V_k$. By the definition of $f$ we have $\sum_{v_j\in N(v_{i_k}),\; j<i_k}f(v_j)=w(v_{i_k})$, so $\sum_{v_j\in V_k\cap N(v_{i_k})}f(v_j)\leq w(v_{i_k})$.  We stop the process when $V_t=\emptyset$ for some $t$. In this case $I=\{v_{i_1},\dots,v_{i_{t-1}}\}$ is an independent set that satisfies $w[I]\geq |f|$ as desired.

\end{proof}

\section{The case of partition graphs}

The main result of this paper is:

\begin{theorem}\label{fractrconj}
Conjecture \ref{conj:main} is true if $H$ is a partition graph. Namely, if $H$ is a partition graph and $G$ is any graph, then
$$(\alpha_\cap^w)^*(G,H)\geq\gamma_\cup^w(G,H).$$
\end{theorem}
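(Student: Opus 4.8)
The plan is to prove the inequality by strong LP duality on the independence side, reducing everything to the production of an \emph{integer} collective dominating pair whose cost is controlled by the dual optimum, and to use the single-graph Lemma (the already-proved $\alpha^w(G)\ge\gamma^w(G)$) as the engine that supplies integrality.

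First I would translate both sides. Since $H$ is a partition graph with cliques $\zv=(V_1,\dots,V_m)$, Lemma \ref{fracit} gives $IP(H)=\{x\ge 0:\ x[V_j]\le 1\ \text{for all }j\}$, so $(\alpha_\cap^w)^*(G,H)=\max\{\sum_{v}w(v)x(v):\ x\in IP(G),\ x[V_j]\le 1\ \forall j\}$. On the domination side, because $N_H(v)=V_{j(v)}$ (with $j(v)$ the index of the part containing $v$), only the totals $c_j:=f_2[V_j]$ matter; any integer distribution of $f_2$ inside a part is admissible, and $f_1$ must then $(w-c)^+$-dominate $G$, where $(w-c)(v)=w(v)-c_{j(v)}$. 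Hence
\[\gamma_\cup^w(G,H)=\min_{c\in\mathbb N^m}\Big(\textstyle\sum_{j}c_j+\gamma^{(w-c)^+}(G)\Big).\]
Writing $x=\sum_{I\in\zi(G)}\lambda_I\chi_I$ and dualizing the first LP (the part constraints give multipliers $y_j$, the normalization gives the independence term) yields
\[(\alpha_\cap^w)^*(G,H)=\min_{y\in\R^m_{\ge 0}}\Big(\textstyle\sum_j y_j+\alpha^{(w-y)^+}(G)\Big),\qquad \alpha^{p}(G):=\max_{I\in\zi(G)}p[I].\]
Thus it suffices to exhibit a single integer vector $c\in\mathbb N^m$ with $\sum_j c_j+\gamma^{(w-c)^+}(G)\le \sum_j y^*_j+\alpha^{(w-y^*)^+}(G)$ for an optimal (possibly fractional) dual solution $y^*$.

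Next I would run the construction. Take $c=\lfloor y^*\rfloor$, realise $f_2$ by placing mass $c_j$ on one vertex of each $V_j$, and apply the single-graph Lemma to $(G,(w-c)^+)$ to obtain an integer $(w-c)^+$-dominating function $f_1$ \emph{together with} the stable set $I$ that its proof extracts, so that $|f_1|\le (w-c)^+[I]$. The pair $(f_1,f_2)$ is $w$-dominating, of cost $\le (w-c)^+[I]+\sum_j c_j$. Using the vertexwise estimate $(w-c)^+(v)-(w-y^*)^+(v)\le\{y^*_{j(v)}\}$, the target inequality collapses to $\sum_j\{y^*_j\}\,(|I\cap V_j|-1)\le 0$; that is, $I$ must meet every \emph{fractional} part (those $j$ with $y^*_j\notin\mathbb Z$) at most once, i.e. it must be a partial IT on the tight parts — exactly the condition making $\chi_I$ feasible there.

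The hard part will be precisely this coupling, and one must resist the temptation to replace $\gamma^{(w-c)^+}(G)$ by $\alpha^{(w-c)^+}(G)$: already for $G=K_{2,2}$ with the two parts equal to the non-edges and $w=(1,1,1,2)$, the value $(\alpha_\cap^w)^*=\tfrac52$ is not an integer while $\alpha^{(w-c)^+}(G)$ strictly exceeds $\gamma^{(w-c)^+}(G)$, so the \emph{entire} integrality gap between $\min_{\R^m}$ and $\min_{\mathbb N^m}$ of the displayed expression must be absorbed by the slack $\alpha^{p}(G)-\gamma^{p}(G)$ that the Lemma produces. The essential step, therefore, is to run the Lemma's greedy extraction on $G$ so that the resulting stable set respects the clique structure of $H$ on the fractional parts \emph{without} destroying the weight bound $|f_1|\le (w-c)^+[I]$ — equivalently, to couple the single-graph domination construction on $G$ with $H$ on the tight parts. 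I expect to secure this through complementary slackness with the primal optimum $x^*$ (which satisfies $x^*[V_j]=1$ on every fractional part), allowing one to select a maximum-weight stable set for $(w-y^*)^+$ that is simultaneously a partial independent transversal on those parts; the remainder is the bookkeeping indicated above.
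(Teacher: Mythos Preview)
Your setup is correct and tracks the paper closely: reformulate via Lemma~\ref{fracit}, pass to the LP over $\zi(G)$ with the part constraints and the simplex constraint, dualize to obtain multipliers $(y_0;y_1,\dots,y_m)$ with $y_0=\alpha^{(w-y)^+}(G)$ at optimum, then set $c=\lfloor y^*\rfloor$ and try to manufacture an integer collectively $w$-dominating pair of cost at most $\sum_j y^*_j+y_0$. Where you diverge from the paper, and where the proposal stops being a proof, is the ``coupling'' step. You need a \emph{single} independent set $I$ for which simultaneously (i) some $(w-c)^+$-dominating $f_1$ satisfies $|f_1|\le (w-c)^+[I]$, and (ii) $|I\cap V_j|\le 1$ for every $j$ with $\{y^*_j\}>0$. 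Invoking the single-graph Lemma produces (i) for one specific greedy $I$, with no handle on (ii); conversely, ``selecting a maximum-weight stable set for $(w-y^*)^+$ that is simultaneously a partial IT on those parts'' (even granting that such a set exists, which you have not shown) addresses (ii) but not (i), since the Lemma does not promise $|f_1|\le (w-c)^+[I]$ for \emph{every} maximum-weight $I$. Your last paragraph concedes that this coupling is the whole difficulty, and then asserts it via ``I expect''; that is the gap.

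The paper sidesteps this entirely: it never requires $I$ to be a partial IT on the fractional parts. After floor-reducing to the weight $w_g=(w-c)^+$ on $G'=G[\operatorname{supp} w_g]$, it takes a primal optimum $x$ of the \emph{reduced} program $(\nu^{w_g})^*(G',\zv)$, chooses $I_0$ of \emph{minimal} $w_g$-weight in $\operatorname{supp}(x)$ (so that $w_g[I_0]\le y_0+\sum_j\{y^*_j\}$ by averaging), sets $h=w_g\cdot\chi_{I_0}$, and proves directly that $h$ is $w_g$-dominating. If some $v$ were undominated, the swap $I'=(I_0\setminus\tilde N(v))\cup\{v\}$ would raise the $w_g$-weight by an \emph{integer} $k_v\ge 1$; but complementary slackness gives equality in the dual constraint at $I_0$ (since $x_{I_0}>0$), and dual feasibility at $I'$ then bounds $k_v$ by $\{y^*_{j(v)}\}-\sum_{u\in\tilde N(v)\cap I_0}\{y^*_{j(u)}\}<1$, a contradiction. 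Thus domination is forced by the fractional-part bound together with integrality of $w$, not by any transversal structure of $I_0$; the pair $(g,h)$ with $g(j)=\lfloor y^*_j\rfloor$ then witnesses $\gamma^w(G,\zv)\le\sum_{j=0}^m y^*_j=(\nu^w)^*(G,\zv)$. This minimal-weight-in-support plus swap-versus-fractional-part argument is exactly the idea your sketch is missing.
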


Let us first re-formulate the left hand  side of the inequality in terms of partitions.
For a partition $\zv=(V_1, \ldots, V_m)$ of the vertex set $V$ of a graph $G$, let \begin{equation} \label{ns} \nsw(G, \zv) =\max\{ \sum_{v \in V}w(v)f(v)\; \mid \; f ~\text{is~a~fractional~ partial ~IT}\}.\end{equation}

By Lemma \ref{fracit} we have:

\begin{lemma}\label{lem:param}
$(\alpha_\cap^w)^*(G,H)=\nsw(G, \zv)$.
\end{lemma}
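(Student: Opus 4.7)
The plan is to observe that the two maxima are taken over the same feasible region, simply described in two different languages. The LHS quantity $(\alpha_\cap^w)^*(G,H)$ optimizes $\sum_v w(v)x(v)$ over $\vec{x}\in IP(G)\cap IP(H)$, while $\nsw(G,\zv)$ optimizes the same linear functional over fractional partial ITs, i.e.\ over $\vec{x}\in IP(G)$ with $\sum_{v\in V_j}x(v)\le 1$ for every $j\le m$. So it suffices to identify $IP(H)$ with the polytope $\{f:V\to\R^+\mid \sum_{v\in V_j}f(v)\le 1 \text{ for every } j\le m\}$.

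This identification is precisely the content of Lemma \ref{fracit}. First I would note that, since $H$ is a partition graph with components $V_1,\ldots,V_m$ (each $V_j$ being a clique), the independent sets in $H$ are exactly the ranges of partial choice functions from the system $\zv=(V_1,\ldots,V_m)$: an independent set picks at most one vertex from each clique $V_j$. Hence $\zi(H)$ coincides with the hypergraph $\zi$ appearing in Lemma \ref{fracit}, and by definition
\[
IP(H)=\mathrm{conv}\bigl(\{\chi_I\mid I\in\zi(H)\}\bigr)=\mathrm{conv}\bigl(\{\chi_I\mid I\in\zi\}\bigr).
\]
Applying Lemma \ref{fracit}, the latter equals $\{f:V\to\R^+\mid \sum_{v\in V_j}f(v)\le 1 \text{ for all } j\le m\}$.

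Consequently, the condition $\vec{x}\in IP(G)\cap IP(H)$ is equivalent to $\vec{x}\in IP(G)$ together with $\sum_{v\in V_j}x(v)\le 1$ for every $j\le m$, which is exactly the definition of a fractional partial IT. Taking the supremum of $\sum_v w(v)x(v)$ over this common feasible region yields both $(\alpha_\cap^w)^*(G,H)$ and $\nsw(G,\zv)$, proving their equality.

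There is essentially no obstacle here beyond correctly quoting Lemma \ref{fracit}; the lemma is a routine dictionary between the clique-incidence description of $IP(H)$ and the partial-transversal description of the feasible fractional ITs. The only mild care needed is to check that $\zi(H)$ as an abstract complex really does coincide with the complex of partial transversals, which is immediate from the definition of a partition graph.
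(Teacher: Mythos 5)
Your proposal is correct and matches the paper's intended argument: the paper derives Lemma \ref{lem:param} directly from Lemma \ref{fracit} in exactly the way you describe, by identifying $IP(H)$ for a partition graph $H$ with the polytope $\{f:V\to\R^+\mid \sum_{v\in V_j}f(v)\le 1 \text{ for all } j\le m\}$, so that the two optimization problems have the same feasible region and objective. Your write-up simply makes explicit the observation that $\zi(H)$ is the complex of ranges of partial choice functions, which the paper leaves implicit.
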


Let us also  re-formulate  the right hand side using the terminology of partitions.
Given partition $\zv=(V_1, \ldots, V_m)$ of $V(G)$,  a pair of non-negative real valued functions $f$ on $V$ and $g$ on $[m]$ is said to be {\em collectively $w$-dominating}  if for every vertex $v \in V_i$ we have $g(i)+f[N(v)] \ge w(v)$. Let $\gamma^w(G, \zv)$  be the minimum of $|g|+|f|$  over all collectively $w$-dominating pairs of functions. In this terminology, $\gamma_\cup^w(G,H)=\gamma^w(G,\zv)$.  In addition, let $\tau^w(G, \zv)$  be the minimum of $|g|+\frac{|f|}{2}$  over all collectively $w$-dominating pairs of functions.

In \cite{abz} the following weighted version of Theorem \ref{fractr} was proved.

\begin{theorem}\label{thm:abz}
 $\nu^w(G,\zv) \ge \tau^w(G, \zv)$.
\end{theorem}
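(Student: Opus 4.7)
My plan is linear-programming duality paired with an integer-rounding step that accounts for the factor $\tfrac{1}{2}$ on $|f|$ in the definition of $\tau^w$. First, I would write $(\nu^w)^*(G,\zv)$ as a linear program in variables $\lambda_I \ge 0$ indexed by the independent sets $I$ of $G$: maximize $\sum_I \lambda_I w[I]$ subject to $\sum_I \lambda_I = 1$ (so that $f = \sum_I \lambda_I \chi_I$ lies in $IP(G)$) and the partition constraints $\sum_I \lambda_I |I \cap V_j| \le 1$ for each $j \le m$. Its LP dual has non-negative variables $\mu = (\mu_j)_{j \le m}$ and a scalar $\alpha \ge 0$, with constraints $\alpha + \sum_{v \in I} \mu_{j(v)} \ge w[I]$ for every independent $I$ of $G$ (writing $j(v)$ for the index of the part containing $v$) and objective $\alpha + |\mu|$; by strong duality this equals $(\nu^w)^*(G,\zv)$.

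Next, I would convert any dual optimum into a collectively $w$-dominating pair. Setting $g := \mu$ and $w'(v) := (w(v) - \mu_{j(v)})^+$, the dual constraint reads $\alpha \ge w'[I]$ for every independent $I$, i.e., $\alpha^{w'}(G) \le \alpha$, where $\alpha^{w'}(G) := \max\{w'[I] : I \text{ independent in } G\}$. The Lemma preceding the theorem (whose inductive proof extends verbatim to any non-negative real-valued weight) then produces a $w'$-dominating function $f$ on $V(G)$ with $|f| \le \alpha^{w'}(G) \le \alpha$. Since $g(j(v)) + f[N(v)] \ge \mu_{j(v)} + w'(v) \ge w(v)$ for every $v$, the pair $(g,f)$ is collectively $w$-dominating of total size at most $|\mu|+\alpha = (\nu^w)^*(G,\zv)$, yielding the fractional bound $(\nu^w)^*(G,\zv) \ge \gamma^w(G,\zv) \ge \tau^w(G,\zv)$.

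The main obstacle is the jump from $(\nu^w)^*$ to the integer $\nu^w$, and this is precisely where the factor $\tfrac{1}{2}$ on $|f|$ is paid. I would round the optimal fractional partial IT $f^*$ to an integer partial IT by selecting, for each participating part $V_j$, a single vertex $v_j$ of $w$-weight at least $g(j) + \tfrac{1}{2} f[N(v_j)]$, so that summing over the selected parts recovers $|g| + \tfrac{1}{2}|f|$. Such local choices may conflict across adjacent parts, so the $v_j$ must be chosen jointly to form an independent set in $G$; this is the essential Haxell-type difficulty, which I expect to be resolved either through a topological connectivity argument on the independence complex of $G$ restricted to the parts, or through a matching/augmenting-path argument on an auxiliary bipartite graph whose sides are the parts and the vertices in the support of $f$.
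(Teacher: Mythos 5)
Your first two paragraphs are correct, but they prove a different statement. The LP dual of $(\nu^w)^*(G,\zv)$, the truncated weight $w'(v)=(w(v)-\mu_{j(v)})^+$, and the greedy lemma $\gamma^{w'}(G)\le\alpha^{w'}(G)\le\alpha$ together yield a collectively $w$-dominating pair $(g,f)$ with $|g|+|f|\le(\nu^w)^*(G,\zv)$, i.e.\ the \emph{fractional} inequality $(\nu^w)^*(G,\zv)\ge\gamma^w(G,\zv)$. That is Theorem \ref{thm:partition}, the main result of the paper, and your derivation of it is genuinely different from (and arguably cleaner than) the one given there, which splits an optimal dual vector into its integer and fractional parts and invokes complementary slackness. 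But Theorem \ref{thm:abz} concerns the \emph{integral} parameter $\nu^w(G,\zv)$: it asserts the existence of an actual partial independent transversal $I$ with $w[I]\ge\tau^w(G,\zv)$. Since $\nu^w\le(\nu^w)^*$, nothing you have established bounds $\nu^w$ from below; and the factor $\frac12$ in $\tau^w$ does not fall out of your construction, because you exhibit one particular dominating pair with $|g|+|f|\le(\nu^w)^*$, whereas $\tau^w$ is a minimum over all pairs, so no comparison between $\nu^w$ and $\tau^w$ follows from this.

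The third paragraph, where the real content of the theorem lives, is not a proof but a restatement of the difficulty. The proposed rounding step --- picking in each participating part a vertex $v_j$ with $w(v_j)\ge g(j)+\frac12 f[N(v_j)]$ --- is unjustified: collective domination gives $g(j)+f[N(v)]\ge w(v)$, an inequality pointing the other way, so there is no reason such a vertex exists in each part; and even granting the choices, the quantities $\frac12 f[N(v_j)]$ need not sum to $\frac12|f|$, since the neighborhoods $N(v_j)$ can overlap or miss most of the support of $f$. You correctly identify that the chosen vertices must moreover form an independent set and that this is the essential Haxell-type difficulty, but deferring it to an unspecified topological-connectivity or augmenting-path argument leaves the entire theorem unproved. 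Note that the paper itself does not prove Theorem \ref{thm:abz} either: it quotes it from \cite{abz}, where it is established by precisely such a nontrivial argument. Your proposal should therefore either be repositioned as an alternative proof of Theorem \ref{thm:partition}, or the third step must be carried out in full.
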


\begin{remark} Note the factor  $\frac{1}{2}$ difference between the definitions of  $\tau^w(G, \zv)$ and $\gamma^w(G,\zv)$. It mirrors the  factor $\frac{1}{2}$ difference (manifest in the factor $2$ in ``$2|I|-1$'') between the statements of
 Theorems \ref{thm:penny} and \ref{fractr}. The same factor appears in the weighted case: the difference between the integral and fractional versions is the $\frac{1}{2}$ factor hidden in the right hand sides
of  Theorems \ref{thm:abz} and of \ref{thm:partition} below. \end{remark}

By Lemma \ref{lem:param} the case of Conjecture \ref{conj:main} in which $H$ is a partition graph  is:

\begin{theorem}\label{thm:partition}
 ${(\nu^w)}^*(G,\zv) \ge \gamma^w(G, \zv)$, where $\zv$ is the partition of $V$ into cliques of $H$.
\end{theorem}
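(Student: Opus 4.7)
The plan is to prove Theorem \ref{thm:partition} by linear programming duality on the maximum defining $\nsw(G,\zv)$, combined with the already-established inequality $\alpha^w(G)\ge\gamma^w(G)$.

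First I would recast the primal as a finite LP. Since $IP(G)$ is the convex hull of the incidence vectors $\chi_I$ for $I\in\zi(G)$, any fractional partial IT has the form $f=\sum_{I\in\zi(G)}\lambda_I\chi_I$ with $\lambda_I\ge 0$ and $\sum_I\lambda_I=1$, and the partition bounds read $\sum_I\lambda_I|I\cap V_j|\le 1$ for each $j$. Writing $\pi(v)$ for the index of the part of $\zv$ containing $v$, and introducing a dual scalar $\mu$ for the convexity constraint and a non-negative dual vector $g$ on $[m]$ for the partition constraints, strong LP duality gives
\[
\nsw(G,\zv)=\min\Bigl\{\mu+|g|\;:\;g\ge 0,\;\mu+\sum_{v\in I}g(\pi(v))\ge w[I]\text{ for all }I\in\zi(G)\Bigr\}.
\]
Taking $I=\emptyset$ forces $\mu\ge 0$.

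Next, for fixed $g\ge 0$, the dual constraint identifies the minimal admissible $\mu$ with a weighted independence number. Defining $w'(v):=[w(v)-g(\pi(v))]^+$, the family of constraints $\mu\ge\sum_{v\in I}(w(v)-g(\pi(v)))$ for $I\in\zi(G)$ is equivalent to $\mu\ge\alpha^{w'}(G)$, since we may restrict attention to independent sets contained in $\{v:w(v)\ge g(\pi(v))\}$. Now the lemma $\alpha^{w'}(G)\ge\gamma^{w'}(G)$, whose inductive proof carries over from $\mathbb{N}$-valued to non-negative real weights verbatim, produces a non-negative function $f$ on $V$ that $w'$-dominates $G$ and satisfies $|f|\le\alpha^{w'}(G)$. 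For every $v\in V_i$,
\[
g(i)+f[N(v)]\ge g(i)+w'(v)\ge w(v),
\]
so $(f,g)$ is collectively $w$-dominating. Hence $\mu+|g|\ge\alpha^{w'}(G)+|g|\ge|f|+|g|\ge\gamma^w(G,\zv)$; minimizing over the dual feasible region and invoking LP duality yields $\nsw(G,\zv)\ge\gamma^w(G,\zv)$.

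The main obstacle, and essentially the only one, is verifying that the earlier lemma $\alpha^w(G)\ge\gamma^w(G)$ remains valid when $w$ is allowed real non-negative values, since the dual variable $g$ need not be integral and so $w'$ generally is not. The inductive construction $f(v_i)=[w(v_i)-\sum_{v_j\in N(v_i),\,j<i}f(v_j)]^+$ preserves non-negativity regardless of integrality, and the subsequent extraction of an independent set via maximal-index vertices in the successive sets $V_k$ uses nothing beyond positivity of $f$ on its support, so this extension is automatic. Everything else is a routine application of LP duality to the finite polytope whose vertices are the $\chi_I$.
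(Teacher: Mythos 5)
Your proof is correct, and it takes a genuinely different route from the paper's. Both arguments begin by writing $\nsw(G,\zv)$ as an LP over the coefficients $x_I$, $I\in\zi(G)$, and dualizing to obtain an optimal pair $(\mu,g)$ (the paper's $(y_0,y_1,\dots,y_m)$) satisfying $\mu+\sum_{v\in I}g(\pi(v))\ge w[I]$ for all $I\in\zi(G)$. From there the paper rounds: it sets $g(j)=\lfloor y_j\rfloor$, passes to the residual weight $w_g=[w-\lfloor y_{j(\cdot)}\rfloor]^+$ on $G'=G[supp(w_g)]$, shows that $(y_0,\{y_1\},\dots,\{y_m\})$ is dual-optimal for the residual problem, and then uses complementary slackness together with the \emph{integrality} of $w_g$ to prove that a minimum-weight independent set $I_0$ in the support of a primal optimum yields a $w_g$-dominating function $h=w_g\cdot\chi_{I_0}$; the dominating pair is then $(h,g)$. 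You instead keep $g$ unrounded, observe that dual feasibility forces $\mu\ge\alpha^{w'}(G)$ for $w'=[w-g\circ\pi]^+$, and invoke the single-graph lemma $\alpha^{w'}(G)\ge\gamma^{w'}(G)$ to manufacture the $V$-part of the dominating pair. Your identification of the minimal admissible $\mu$ with $\alpha^{w'}(G)$ is sound (restricting an independent set to its vertices with $w(v)\ge g(\pi(v))$ keeps it independent, and the constraint for $I=\emptyset$ handles the degenerate case), and you are right that the greedy proof of that lemma nowhere uses integrality, so it applies verbatim to the real-valued $w'$. What your route buys: it is shorter, it bypasses the floor/fractional-part bookkeeping and the complementary-slackness computation entirely, and it proves the theorem for arbitrary non-negative real weight functions $w$, whereas the paper's contradiction (``$k_v\ge 1$'' against ``$k_v<1$'') genuinely needs $w$ to be integral. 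What the paper's route buys: the dominating function it produces on $V$ is supported on a single independent set drawn from the primal optimum, and its complementary-slackness mechanism is the one that would presumably have to be generalized to attack Conjecture \ref{conj:main} for arbitrary $H$.
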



\begin{proof}
Note that  if $f = \sum_{I \in \zi(G)}x_I\chi_I$  then  $f[V_j]=\sum_{I\in\zi(G)}x_I|I\cap
V_j|$, and thus the constraints defining the linear program for $\nsw(G, \zv)$ are $\sum_{I\in\zi(G)}x_I|I\cap
V_j|\leq 1$ and $\sum_{I\in\zi(G)}x_I=1$.
\begin{assertion}
Let
 \begin{equation}\label{t}\nsw(G,\zv)=\max\{\sum_{I\in \zi(G)}x_I w[I]|\sum_{I\in\zi(G)}x_I|I\cap
V_j|\leq 1,\; \sum_{I\in\zi(G)}x_I\leq 1\}\end{equation}

\end{assertion}

\begin{proof}
Denote the right hand side by $t$.
If $f=\sum_{I\in\zi(G)}x_I\chi_I$ is an optimal solution of the linear program \eqref{ns} then $\nsw(G,\zv)=\sum_{v\in V}w(v)f(v)=\sum_{I\in \zi(G)}x_I w[I]$. Hence $\nsw(G,\zv)\leq t$.

On the other hand, suppose by negation that there exists an optimal solution of the linear program \eqref{t} that satisfies $\sum_{I\in\zi(G)}x_I=1-\epsilon$  for some $\epsilon>0$. Clearly $t>0$, and hence there exists an independent set $I_0$ such that $x_{I_0}>0$. Choose a vertex $v\in I_0$, and  define a  vector $\vec{x'}$  as follows. Let $x'_{I_0}=x_{I_0}-\epsilon$, $x'_{I_0\setminus\{v\}}=x'_{\{v\}}=\epsilon$ and  $x'_I=x_I$ otherwise. Note that the vector $x'$ satisfies   constrains of the linear program, but the weight of $\sum_{I\in\zi(G)}x'_I\chi_I$ is  $\sum_{I\in\zi(G)}x_Iw[I]+\epsilon$, contradicting the maximality of  the optimal solution. Hence this optimal solution is also a solution for the linear program \eqref{ns}, so, $t\leq \nsw(G,\zv)$ proving the desired equality.
\end{proof}
By LP  duality
$\nsw(G, \zv)$ is the minimum of $\sum_{j=0}^m y_j$ over all vectors $\vec{y}=(y_0,y_1,\dots,y_m)$ satisfying  $y_0+\sum_{j=1}^my_j |I\cap V_j| \geq w[I]$ for all  $I\in\zi(G)$.
Let $\vec{y}=(y_0,y_1,\dots,y_m)$ be a vector in which the minimum is attained, meaning that  $\sum_{j=0}^m y_j=\nsw(G,\zv)$,  and let  $g(j)=\lfloor y_j\rfloor$ for all $j \le m$. We define a new weight function $w_g$ by $w_g(v)=[w(v)-\lfloor y_{j(v)}\rfloor]^+$, where $j(v)$ is that $j$ for which $v \in V_j$. Let $V'=\{v \mid w_g(v)>0\}$ be the support of $w_g$, and let $G'=G[V']$.

For a number $s$ let $\{s\}$  be the fractional part of $s$, namely $\{s\}=s- \lfloor s \rfloor$.

\begin{assertion}
The vector $(y_0,\{y_1\},\dots,\{y_m\})$ is an optimal solution for the program dual to: $(\nu^{w_g})^*(G', \zv)$, namely
 $$y_0+\sum_{j=1}^m \{y_j\}=(\nu^{w_g})^*(G',\zv):=$$ $$\max\{\sum_{I\in\zi(G')}x_I w_g[I]\mid \;\sum_{I\in\zi(G')} x_I\leq1\; \text{and} \;  \forall j\;\sum_{I\in\zi(G')} x_I |I\cap V_j|\leq 1\}$$
\end{assertion}

\begin{proof}
Denote by $y$ the sum $y_0+\sum_{j=1}^m \{y_j\}$.
For every $v\in V'$, we have  $w_g(v)=w(v)-\lfloor y_{j(v)}\rfloor$, hence $y_0+\sum_{j=1}^m\{y_j\} |I\cap V_j|\geq w_g[I]$ for every $I\in\zi(G')\subseteq\zi(G)$, proving that $y\geq (\nu^{w_g})^*(G',\zv)$.

For the reverse inequality, assume for negation that  there exists a  solution $\vec{x}=(x_0,\dots,x_m)$ such that $\sum_{j=0}^m x_j<y$.
Then the vector $\vec{x}=(x_0,x_1+\lfloor y_1\rfloor,\dots,x_m+\lfloor y_m\rfloor)$ is a solution to the original problem that satisfies $x_0+\sum_{j=1}^m x_j+\lfloor y_j\rfloor<\sum_{j=0}^m y_j=\nsw(G,\zv)$, contradicting the optimality of $\vec{x}$.

\end{proof}

Since an optimal solution of the primary problem corresponding to the weight function $w_g$ satisfies $\sum_{I\in\mathcal{I}(G')}x_I=1$,  there exists a set $I$  such that $x_I>0$.

Let $I_0$ be a set of minimal weight in $supp(x)$.   Then \begin{equation}\label{I_0} w_g[I_0]=w_g[I_0]\sum_{I\in\zi(G')}x_I\leq \sum_{I\in\zi(G')}x_I w_g[I]=(\nu^{w_g})^*(G',\zv)\end{equation}

Let $h:V'\to\mathbb{N}$ defined by $h(v)=w_g(v)$  if $v\in I_0$ and $h(v)=0$ otherwise.
\begin{assertion}
The function $h$ is $w_g$-dominating in $G'$.
\end{assertion}

\begin{proof}
Suppose not. Then there exists a vertex $v\in V'$ such that $w_g(v)>h(v)+h[\tilde{N}(v)]=h(v)+w_g[\tilde{N}(v)\cap I_0]$. Clearly, $v\notin I_0$, hence $h(v)=0$. The set $I'=(I_0\setminus \tilde{N}(v))\cup \{v\}$ satisfies
$$w_g[I']=w_g[I_0\setminus \tilde{N}(v)]+w_g(v)>w_g[I_0\setminus \tilde{N}(v)]+w_g[I_0\cap \tilde{N}(v)]=w_g[I_0]$$ Since $w$ is an integral function,  $w_g$ is also integral, hence $w_g[I']=w_g[I_0]+k_v$ for some $k_v\geq 1$.

On the other hand, by the definition of the dual program $w_g[I']\leq y_0+\sum_{j=1}^m \{y_j\}|I'\cap V_j|$. In addition, since $x_{I_0}>0$, the complementary slackness conditions state that  equality holds in the corresponding constraint in the dual problem, i.e. $w_g[I_0]= y_0+\sum_{j=1}^m \{y_j\}|I_0\cap V_j|$. Hence,

$$k_v=w_g[I']-w_g[I_0]\leq \sum_{j=1}^m \{y_j\}(|I'\cap V_j|-|I_0\cap V_j|)=\{y_{j(v)}\}-\sum_{u\in N(v)\cap I_0}\{y_{j(u)}\}<1 $$

is a contradiction.
\end{proof}

Since $g$ dominates all  vertices $v\in V\setminus V'$, the pair $(g,h)$ is $w$-dominating, hence using \eqref{I_0}  we have $$\gamma^w(G,\zv)=\leq |g|+|h|=|g|+w_g[I_0]\leq |g|+(\nu^{w_g})^*(G',\zv)$$ $$=\sum_{j=1}^m \lfloor y_j\rfloor+y_0+\sum_{j=1}^m \{y_j\}=\sum_{j=0}^m y_j=\nsw(G, \zv) $$
as desired.

\end{proof}

\end{section}

\end{document}